\numberwithin{equation}{section}
\theoremstyle{plain}
\newtheorem{teo}{Theorem}[section]
\newtheorem*{teo*}{Theorem}
\newtheorem{teoA}{Theorem}
\newtheorem{corA}[teoA]{Corollary}
\newtheorem*{cor*}{Corollary}
\newtheorem{lem}[teo]{Lemma}
\newtheorem*{lem*}{Lemma}
\newtheorem{prop}[teo]{Proposition}
\newtheorem{defn}[teo]{Definition}
\newtheorem*{prop*}{Proposition}
\newtheorem{exa}[teo]{Example}
\theoremstyle{remark}
\newtheorem{obs}[teo]{Remark}
\newcommand{\R}{\ensuremath{{\mathbb{R}}}}
\newcommand{\esf}{\ensuremath{{\mathbb{S}}}}
\newcommand{\g}{\ensuremath{\mathtt{g}}}
\renewcommand\vec[1]{\boldsymbol{#1}}
\newcommand\Y{\ensuremath{\mathbf{Y}}}
\begin{document}

\title[The submanifold compatibility equations in magnetic geometry]{The submanifold compatibility equations in magnetic geometry}

\author[I. Terek]{Ivo Terek}

\address{\parbox{\linewidth}{Department of Mathematics \\ University of California, Riverside \\ Riverside, CA 92521, USA \\ }}

\email{ivo.terek@ucr.edu}

\keywords{Magnetic systems $\cdot$ Compatibility equations $\cdot$ Submanifold theory}
\subjclass[2020]{53C15 $\cdot$ 53C40}

\begin{abstract}
With the notions of magnetic curvature and magnetic second fundamental form recently introduced by Assenza and Albers--Benedetti--Maier, respectively, we establish analogues of the Gauss, Ricci, and Codazzi--Mainardi compatibility equations from submanifold theory in the magnetic setting.
\end{abstract}
\maketitle

\section{Introduction}

A generalization of Riemannian geometry, of much interest in differential geometry and dynamical systems, is \emph{magnetic geometry}: instead of considering just a Riemannian metric $\g$ on a smooth manifold $M$, we consider a pair $(\g,\sigma)$, where $\g$ is a Riemannian metric and $\sigma$ is a closed $2$-form on $M$. Such a pair is called a \emph{magnetic system} on $M$ (in particular, $\sigma$ is called the \emph{magnetic form}), and the dynamics induced by so-called \emph{Landau-Hall equation}, that is, the non-homogeneous differential equation
\begin{equation}\label{eq:landau-hall}
  \frac{{\rm D}\dot{\gamma}}{{\rm d}t}(t) = \Y_{\gamma(t)}(\dot{\gamma}(t))
\end{equation}imposed on smooth curves $\gamma\colon I\to M$, models the motion of particles moving in $M$ subject to the action of the magnetic field corresponding to $\sigma$. Here, ${\rm D}/{\rm d}t$ is the covariant derivative operator along $\gamma$ induced by the Levi-Civita connection $\nabla$ of $\g$, and $\Y\colon TM\to TM$ is the \emph{Lorentz force operator} built from the pair $(\g,\sigma)$, being characterized by the relation
\begin{equation}\label{defn:Lorentz-force}
\g_x(\Y_x(v),w) = \sigma_x(v,w), \quad\mbox{for all $x\in M$ and $v,w\in T_xM$.}  
\end{equation}
 Closedness of $\sigma$ is, of course, nothing more than Gauss's law, stating that magnetic fields are divergence-free (or in more physical terms: magnetic monopoles are not observed in nature). This formalism goes back to Anosov and Sinai \cite{AnosovSinai_1967} and Arnold \cite{Arnold_1961} in the 1960s, but it is still a very active topic of research---see, e.g., \cite{AMP_2015, AMMP_2017, AB_2021, BK_2022}.

For a very long time, a missing part of the puzzle was the notion of curvature for magnetic systems: as soon as $\sigma$ does not identically vanishes, solutions of \eqref{eq:landau-hall} are not the actual geodesics for any linear connection on $M$ \cite[Proposition 2.1]{GLH_2005}. Therefore, there is no curvature tensor coming from a connection which may properly control geometric (or dynamic) properties of the magnetic system $(\g,\sigma)$. In the surface case, the definition of magnetic Gaussian curvature is due to G. and M. Paternain \cite{PP96}. In the higher-dimensional case, the definitions of magnetic curvature operator, sectional curvature, and Ricci curvature, are due to Assenza \cite{Assenza_2024}; earlier work by Gouda \cite{Gouda-97, Gouda-98} (focusing on uniform magnetic systems, that is, those with $\nabla\sigma=0$) and Wojtkowski \cite{Wojtkowski_2000} also hinted at them. We review these definitions in detail in Section \ref{sec:magnetic_curvature}.

With suitable definitions of magnetic curvature in place, the way is open for many more classical results in Riemannian geometry to be generalized to the magnetic setting. For instance, magnetic versions of Synge's theorem \cite[Theorem C]{Assenza_2024} and of the Bonnet-Myers theorem \cite[Lemma 14]{Assenza_2024} have already been established. Green's theorem \cite{Green_1958} has also been generalized \cite[Theorem A]{ART_2024}, and nontrivial magnetic systems with constant magnetic sectional curvature have been characterized: they are all magnetically flat K\"{a}hler magnetic systems, with the underlying metric having constant holomorphic sectional curvature, which is necessarily negative outside of the surface case \cite[Theorem D]{ART_2024}; such K\"{a}hler systems have been extensively investigated by Adachi \cite{Adachi_2014, Adachi_2012,Adachi_1997, Adachi_1995, Adachi_2019,AdachiBai_2013}.

In the recent work \cite{ABM_2025} of Albers--Benedetti--Maier, studying the magnetic dynamics of round odd-dimensional spheres $\esf^{2n+1}$ with the magnetic form induced by the standard contact structure of $\esf^{2n+1}$, they are led to consider the \emph{totally magnetic submanifolds} of $\esf^{2n+1}$: the submanifolds $N\subseteq \esf^{2n+1}$ with the property that magnetic geodesics---that is, solutions of \eqref{eq:landau-hall}---which start tangent to $N$ remain in $N$ for sufficiently small time (that is, the magnetic analogues of totally geodesic submanifolds in Riemannian geometry). In \cite[Theorem 1.4]{ABM_2025} they present a general criterion for a submanifold to be totally magnetic, in terms of its second fundamental form and Lorentz force, and they completely describe the totally magnetic submanifolds of $\esf^{2n+1}$ up to \emph{magnetomorphisms}, which are the diffeomorphisms preserving both the metric and magnetic form.

The criterion just mentioned was originally presented in an earlier preprint \cite{ABM_2025-v1} with the aid of a suitable \emph{magnetic second fundamental form}, which brings us to the goal of the present paper: using the definitions of magnetic curvature for higher-dimensional systems, introducing a definition of magnetic shape operator, and extending the definition of magnetic second fundamental form given in \cite{ABM_2025-v1}, we establish magnetic analogues of the classical compatibility equations of submanifold theory (Gauss, Ricci, and Codazzi--Mainardi).

\begin{teoA}\label{mag-comp}
  Let $(\g,\sigma)$ be a magnetic system on a smooth manifold $M$, and \mbox{$N\subseteq M$} be a submanifold of $M$, equipped with its induced magnetic system. The $s$-magnetic curvature operator of $M$ is decomposed as
\[
  \begin{split}
    {\rm (i)}~ (\vec{M}^{\g,\sigma}_s)^M_{(x,v)}(w)^\top &= (\vec{M}^{\g,\sigma}_s)^N_{(x,v)}(w)  -S_{({\rm II}^\sigma_s)_{(x,v)}(v)}(w)+ (S^\sigma_s)_{(x,v)}({\rm II}_x(v,w))\\ &\quad  - \frac{1}{2} {\rm P}_{v^\perp}\left(sS_{\Y^\perp_x(w)}(v) + s\Y^M_x({\rm II}_x(v,w))^\top + \frac{1}{2}\Y^M_x(\Y^\perp_x(w))^\top\right) \\ {\rm (ii)}~  (\vec{M}^{\g,\sigma}_s)^M_{(x,v)}(w)^\perp &= (\nabla^\perp_w{\rm II}^\sigma_s)_{(x,v)}(v) - (\nabla^\perp_v{\rm II}^\sigma_s)_{(x,v)}(w) - \frac{s}{2} (\nabla^\perp_v\Y^\perp)_x(w) \\ &\quad -s{\rm II}_x(w, \Y^N_x(v)) + \frac{s}{2} \Y^M_x({\rm II}_x(v,w))^\perp + \frac{s}{2} {\rm II}_x(v,\Y^N_x(w)) \\ &\quad -\frac{3}{4} \Y^\perp_x({\rm P}_v(\Y^N_x(w))) - \frac{1}{4}\Y^\perp_x(\Y^N_x(w)) - \frac{1}{4}\Y^M_x(\Y_x^\perp(w))^\perp
  \end{split}
\]for all $(x,v) \in SN$, $w\in T_xN \cap v^\perp$, and $s>0$.  
\end{teoA}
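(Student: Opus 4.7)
The plan is to start with Assenza's explicit formula for the $s$-magnetic curvature operator, as reviewed in Section \ref{sec:magnetic_curvature}, which expresses $(\vec{M}^{\g,\sigma}_s)_{(x,v)}(w)$ on $w\in v^\perp$ as a combination of the Riemannian curvature term $R(w,v)v$, a term linear in $s$ involving $\nabla \Y$, and quadratic corrections in $\Y$ modulated by the projection ${\rm P}_{v^\perp}$. Writing this formula for both the ambient system on $M$ and the induced system on $N$, with $v\in S_xN$ and $w\in T_xN\cap v^\perp$, the proof reduces to separately decomposing each of these three constituents under the tangential/normal split along $N$.

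The first step is to decompose the ambient Lorentz force along $N$ as $\Y^M|_{TN}=\Y^N+\Y^\perp$, where $\Y^N$ is the Lorentz force of the induced system on $N$, and to apply the Gauss--Weingarten formulas to $(\nabla^M_v\Y^M)(w)$: the tangential component yields $(\nabla^N_v\Y^N)(w)$, the shape-operator correction $-S_{\Y^\perp(w)}(v)$, and further $\Y^M({\rm II}(v,w))$ contributions, while the normal component is, essentially by construction, what defines the extended magnetic second fundamental form ${\rm II}^\sigma_s$ and the magnetic shape operator $S^\sigma_s$ introduced in this paper. The second step is to handle the purely Riemannian curvature term $R(w,v)v$ via the classical Gauss equation (for the tangential component, producing $R^N(w,v)v$ plus the standard ${\rm II}$-$S$ corrections) and the classical Codazzi--Mainardi equation (for the normal component, producing the $\nabla^\perp{\rm II}$ terms). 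The third step is to expand the quadratic $\Y^M\circ\Y^M$ contributions using the same tangential/normal splitting, and to regroup everything, repackaging the resulting tangential and normal pieces into the magnetic Gauss- and Codazzi-type identities (i) and (ii).

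The main obstacle is purely one of bookkeeping: a large number of terms appear with coefficients $s$, $s/2$, $1/4$, and $3/4$, reflecting that the magnetic curvature is quadratic in $s\Y$ with Jacobi-type normalization $\tfrac{1}{4}(s\Y)^2$, and one must consistently recognize which combinations of ${\rm II}$, $S$, $\Y^N$, $\Y^\perp$, and their covariant derivatives assemble into ${\rm II}^\sigma_s$, $S^\sigma_s$, or $\nabla^\perp{\rm II}^\sigma_s$. Terms proportional to $v$ are killed by ${\rm P}_{v^\perp}$ and may be freely inserted or omitted to bring the expression into the symmetric form of the statement; since both sides depend smoothly on $(x,v,w)$, it is enough to verify the identity pointwise by choosing, at each $x\in N$, a local frame adapted to the splitting $TM|_N=TN\oplus TN^\perp$, in which the Gauss--Weingarten formulas take their cleanest form.
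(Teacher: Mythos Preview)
Your proposal is correct and follows essentially the same approach as the paper. The paper organizes the computation by first decomposing the quadratic endomorphism $A^{\g,\sigma}$ (Proposition~\ref{decomp_Ags}), then establishing the Gauss--Weingarten decomposition of $(\nabla^M_v\Y^M)(w)$ as an auxiliary lemma (Lemma~\ref{cov-der-YM}), and finally decomposing $R^{\g,\sigma}_s$ using the classical Gauss and Codazzi--Mainardi equations together with that lemma (Proposition~\ref{decomp-Rgs}); Theorem~\ref{mag-comp} is then the sum of these two propositions, which matches your three-step plan up to the grouping of terms. Your closing remark about choosing an adapted local frame is unnecessary---the paper works directly with tensorial identities and the projections~\eqref{all-projectors}, extending $v,w$ to local vector fields only momentarily in the proof of Lemma~\ref{cov-der-YM}---but this does not affect correctness.
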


The technical definitions of the objects ${\rm II}^\sigma_s$, $S^\sigma_s$, and $\Y^\perp$ appearing in Theorem \ref{mag-comp} (magnetic second fundamental form, magnetic shape operator, normal Lorentz force) are given in Section \ref{mag-II}. This immediately gives us a relation between the magnetic sectional curvatures of $M$ and $N$:

\begin{corA}\label{mag-sec-Gauss}
  With the setup of Theorem \ref{mag-comp}, we have that
  \[
    \begin{split}
({\rm sec}^{\g,\sigma}_s)_x^M(v,w) &= ({\rm sec}^{\g,\sigma}_s)_x^N(v,w) - \g_x\big(({\rm II}^\sigma_s)_{(x,v)}(v), {\rm II}_x(w,w)\big) \\ &\qquad + \g_x\big({\rm II}_x(v,w), ({\rm II}^\sigma_s)_{(x,v)}(w)\big) + \frac{1}{4} \|\Y^\perp_x(w)\|^2      
    \end{split}
  \]for every $s>0$ and $(x,(v,w)) \in {\rm St}_2(N,\g)$.
\end{corA}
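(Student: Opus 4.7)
The plan is to take the $\g_x$-inner product of equation (i) in Theorem \ref{mag-comp} against the vector $w$. With the magnetic sectional curvature defined on an orthonormal 2-frame $(v,w)$ via $({\rm sec}^{\g,\sigma}_s)_x(v,w)=\g_x\big((\vec{M}^{\g,\sigma}_s)_{(x,v)}(w),w\big)$, the left-hand side of (i) pairs with $w$ to give exactly $({\rm sec}^{\g,\sigma}_s)^M_x(v,w)$---only the tangential component of $(\vec{M}^{\g,\sigma}_s)^M_{(x,v)}(w)$ contributes, since $w\in T_xN$---and the first term on the right gives $({\rm sec}^{\g,\sigma}_s)^N_x(v,w)$.

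I would then convert the two shape-operator terms into inner products of second fundamental forms. The ambient adjointness $\g_x(S_\xi u,z)=\g_x({\rm II}_x(u,z),\xi)$, together with its magnetic counterpart $\g_x\big((S^\sigma_s)_{(x,v)}(\xi),z\big)=\g_x\big(({\rm II}^\sigma_s)_{(x,v)}(z),\xi\big)$, which I would record alongside the definition of $S^\sigma_s$ in Section \ref{mag-II}, convert $-\g_x\big(S_{({\rm II}^\sigma_s)_{(x,v)}(v)}(w),w\big)$ into $-\g_x\big(({\rm II}^\sigma_s)_{(x,v)}(v),{\rm II}_x(w,w)\big)$ and $\g_x\big((S^\sigma_s)_{(x,v)}({\rm II}_x(v,w)),w\big)$ into $\g_x\big({\rm II}_x(v,w),({\rm II}^\sigma_s)_{(x,v)}(w)\big)$, matching the second and third terms on the right-hand side of the corollary.

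It remains to analyze the bracketed projection term. Since $w\in v^\perp\cap T_xN$, the orthogonal projection ${\rm P}_{v^\perp}$ is self-adjoint and fixes $w$, and all tangential projections $(\cdot)^\top$ become trivial upon inner product with $w$. Using the defining relation $\g_x(\Y^M_x u,z)=\sigma_x(u,z)=-\g_x(u,\Y^M_x z)$ and the orthogonal decomposition $\Y^M_x(u)=\Y^N_x(u)+\Y^\perp_x(u)$ for $u\in T_xN$, I would verify that the two $s$-linear sub-terms in the bracket---one of type $\g_x({\rm II}_x(v,w),\Y^\perp_x(w))$ coming from the shape-operator term, and one of type $\sigma_x({\rm II}_x(v,w),w)$ coming from $\Y^M_x({\rm II}_x(v,w))^\top$---cancel each other exactly after applying skew-symmetry of $\sigma$ to the second one, while the remaining contribution $-\tfrac{1}{4}\sigma_x(\Y^\perp_x(w),w)$ collapses, using skew-symmetry once more and the fact that only the normal component $\Y^\perp_x(w)$ of $\Y^M_x(w)$ pairs nontrivially with $\Y^\perp_x(w)$, to exactly $\tfrac{1}{4}\|\Y^\perp_x(w)\|^2$.

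The computation is essentially mechanical once the adjointness identities and the orthogonal decomposition of $\Y^M$ along $N$ are in hand; the main obstacle I anticipate is pure bookkeeping---tracking the signs produced by the two applications of the identity $\sigma_x(a,b)=-\sigma_x(b,a)$ carefully enough that the two $s$-linear contributions indeed cancel, leaving the clean residue $\tfrac{1}{4}\|\Y^\perp_x(w)\|^2$. The independence of this residual from the parameter $s>0$, which is asserted by the statement of the corollary, serves as a useful \emph{a posteriori} sanity check on the cancellation.
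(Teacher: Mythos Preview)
Your proposal is correct and follows essentially the same route as the paper: take the $\g_x$-inner product of Theorem~\ref{mag-comp}(i) with $w$, convert the shape-operator terms via the adjointness relations, and observe that the bracketed projection term contributes only $\tfrac{1}{4}\|\Y^\perp_x(w)\|^2$. The paper states this last point more tersely---it simply notes that the $s$-linear portion (the second line of Proposition~\ref{decomp-Rgs}(i)) is orthogonal to $w$---whereas you spell out the skew-symmetry cancellation explicitly; the remaining $\tfrac{1}{4}\|\Y^\perp_x(w)\|^2$ then arises from the $A^{\g,\sigma}$ contribution of Proposition~\ref{decomp_Ags}(i), exactly as you describe.
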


In particular, the codimension-one case deserves some attention:

\begin{corA}\label{cor-hyp}
Let $(\g,\sigma)$ be a magnetic system on a smooth manifold $M$, and \mbox{$N\subseteq M$} be a two-sided hypersurface equipped with its induced magnetic system and a global unit normal field $\eta\in \mathfrak{X}^\perp(N)$. Then we have that \[
  \begin{split}
{\rm (i)} ~ ({\rm sec}^{\g,\sigma}_s)^N_x(v,w) &= ({\rm sec}^{\g,\sigma}_s)^M_x(v,w) \\ &\quad  +
    \begin{vmatrix}
       \g_x((S^\sigma_s)_{(x,v)}(\eta_x),v)  & \g_x(S_\eta(v), w) \\ \g_x((S^\sigma_s)_{(x,v)}(\eta_x),w) & \g_x(S_\eta(w),w)
    \end{vmatrix} - \frac{1}{4} \theta_x(w)^2,    \\[1em] {\rm (ii)}~ ({\rm Ric}^{\g,\sigma}_s)^N(x,v)&= ({\rm Ric}^{\g,\sigma}_s)^M(x,v) - ({\rm sec}^{\g,\sigma}_s)_x^M(v,\eta_x) \\ &\hspace{-1.4em}+ {\rm tr}(S_\eta) \g_x((S^\sigma_s)_{(x,v)}(\eta_x), v)  -\g_x((S^\sigma_s)_{(x,v)}(\eta_x), S_\eta(v)) - \frac{1}{4} \|\theta|_{v^\perp}\|^2,
  \end{split}
\]where $\theta\in \Omega^1(N)$ is defined by $\theta_x(w) = \g_x(\Y^\perp_x(w),\eta_x)$.
\end{corA}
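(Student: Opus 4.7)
The plan is to derive both parts of the corollary from Corollary~\ref{mag-sec-Gauss} by specializing to the codimension-one case, where every normal-valued quantity at $x\in N$ is a scalar multiple of $\eta_x$.

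For part (i), I will first record the standard hypersurface identity ${\rm II}_x(a,b) = \g_x(S_\eta(a),b)\,\eta_x$ and its magnetic analogue, which (as follows from the definitions of ${\rm II}^\sigma_s$ and $S^\sigma_s$ reviewed in Section~\ref{mag-II}) must take the form $({\rm II}^\sigma_s)_{(x,v)}(a) = \g_x((S^\sigma_s)_{(x,v)}(\eta_x),a)\,\eta_x$ by duality with $\eta_x$ and the definition of the shape operator. Similarly, the normal Lorentz force satisfies $\Y^\perp_x(w) = \theta_x(w)\,\eta_x$ and hence $\|\Y^\perp_x(w)\|^2 = \theta_x(w)^2$. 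Substituting into the conclusion of Corollary~\ref{mag-sec-Gauss}, the two mixed terms become
\[
  \g_x((S^\sigma_s)_{(x,v)}(\eta_x),v)\,\g_x(S_\eta(w),w) - \g_x(S_\eta(v),w)\,\g_x((S^\sigma_s)_{(x,v)}(\eta_x),w),
\]
which is exactly the displayed $2\times 2$ determinant, and solving for $({\rm sec}^{\g,\sigma}_s)^N_x(v,w)$ produces the formula in (i).

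For part (ii), I will fix an orthonormal basis $\{v,e_1,\dots,e_{n-1}\}$ of $T_xN$ (with $n=\dim N$) and sum the identity in (i) over the $e_i$'s to compute ${\rm Ric}^N$. The sectional curvatures of $M$ along $(v,e_i)$ account for ${\rm Ric}^M(x,v) - ({\rm sec}^{\g,\sigma}_s)_x^M(v,\eta_x)$, since the missing summand in the full trace is the one along $\eta_x$. The determinant sum splits into a scalar times $\sum_i\g_x(S_\eta(e_i),e_i) = {\rm tr}(S_\eta) - \g_x(S_\eta(v),v)$ and a bilinear pairing, which I will evaluate using the elementary identity
\[
  \sum_{i=1}^{n-1}\g_x(A,e_i)\g_x(B,e_i) = \g_x(A,B) - \g_x(A,v)\g_x(B,v)
\]
valid for unit $v$. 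Applied to $A=S_\eta(v)$ and $B=(S^\sigma_s)_{(x,v)}(\eta_x)$, the two $\g_x(\cdot,v)\g_x(\cdot,v)$ terms cancel cleanly against the subtracted piece of the trace, leaving precisely ${\rm tr}(S_\eta)\,\g_x((S^\sigma_s)_{(x,v)}(\eta_x),v) - \g_x((S^\sigma_s)_{(x,v)}(\eta_x),S_\eta(v))$. Finally, $\sum_i\theta_x(e_i)^2 = \|\theta|_{v^\perp}\|^2$ gives the last term.

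There is no serious obstacle here: once the codimension-one decomposition of ${\rm II}^\sigma_s$, $S^\sigma_s$, and $\Y^\perp$ is in place, (i) is a direct substitution into Corollary~\ref{mag-sec-Gauss}, and (ii) is a bookkeeping exercise. The only point that demands care is the cancellation of the $\g_x(S_\eta(v),v)$ contributions in (ii), which I will verify explicitly to justify why the final formula involves the full trace ${\rm tr}(S_\eta)$ rather than its restriction to $v^\perp$.
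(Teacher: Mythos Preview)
Your proposal is correct and follows essentially the same approach as the paper: substitute the codimension-one identities ${\rm II}_x(v,w)=\g_x(S_\eta(v),w)\eta_x$ and $({\rm II}^\sigma_s)_{(x,v)}(w)=\g_x((S^\sigma_s)_{(x,v)}(\eta_x),w)\eta_x$ into Corollary~\ref{mag-sec-Gauss} for~(i), then sum (i) over an orthonormal basis of $E^N_{(x,v)}$ and add--subtract $({\rm sec}^{\g,\sigma}_s)_x^M(v,\eta_x)$ for~(ii). Your explicit verification of the $\g_x(S_\eta(v),v)$ cancellation is a welcome elaboration of what the paper leaves to the reader.
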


One may also consider and compute the remaining six projected quantities \begin{equation}\label{remaining-projected}
  \begin{split}
    &(\vec{M}^{\g,\sigma}_s)^M_{(x,v)}(\eta)^\top,\quad (\vec{M}^{\g,\sigma}_s)^M_{(x,v)}(\eta)^\perp, \quad (\vec{M}^{\g,\sigma}_s)^M_{(x,\xi)}(w)^\top, \\ &(\vec{M}^{\g,\sigma}_s)^M_{(x,\xi)}(w)^\perp, \quad (\vec{M}^{\g,\sigma}_s)^M_{(x,\xi)}(\eta)^\top,\quad\mbox{and}\quad(\vec{M}^{\g,\sigma}_s)^M_{(x,\xi)}(\eta)^\perp,
  \end{split}\end{equation}
where $v,w$ are tangent to $N$, and $\xi,\eta$ are normal to $N$. The main reason we do not pursue it further here is technical: the definition of magnetic curvature operator is made as an extension of the one for \emph{tidal force operators} (in the sense of \cite[Definition 8, p. 219]{ONeill}), built from the Riemann curvature tensor and appearing prominently in the Jacobi equation: $F_{(x,v)}\colon v^\perp \to v^\perp$, given by $F_{(x,v)}(w) = R_x(w,v)v$. The quantities \eqref{remaining-projected} would then involve the terms $R^M_x(\eta,v)v$, $R^M_x(w,\xi)\xi$, and $R^M_x(\eta,\xi)\xi$, none of which can be directly projected using the classical compatibility equations \eqref{Gauss-eqn}--\eqref{Ricci-eqn}.

In Appendix \ref{app-killing}, we apply our results to compute the magnetic sectional and Ricci curvatures of the so-called \emph{Killing magnetic systems} on three-di\-men\-si\-o\-nal manifolds, that is, those whose Lorentz force operator is given by taking cross products with a Killing vector field. The flows of such systems have been computed in several ambient manifolds and received considerable attention in recent research \cite{Cabrerizo_2013, Nistor_2015, DIM_2023, KDA_2023, BEK_2024}, even in the case where the underlying metric is Lorentzian instead of Riemannian \cite{Iqbal_2022, ZLC_2024, EI_2025}. We conclude the discussion by deducing from the behavior of the magnetic curvatures that the Ma\~{n}\'{e} critical value of the Killing magnetic systems on $\esf^3$ whose magnetic vector fields correspond to left-multiplication by some unit quaternion equals $1/2$ (Proposition \ref{Mane-S3}).

\medskip

\noindent \textbf{Acknowledgements.} I would like to thank James Marshall Reber for a very helpful discussion on Ma\~{n}\'{e} critical values and for some feedback on a previous draft of this text. I also appreciate the comments made by the referees, which helped improve the exposition.

\section{Magnetic curvature}\label{sec:magnetic_curvature}
 
We briefly review the several definitions of magnetic curvature presented by Assenza in \cite{Assenza_2024}. Let $M$ be a smooth manifold, $(\g,\sigma)$ be a magnetic system on $M$, and $\Y$ be its Lorentz force operator as in \eqref{defn:Lorentz-force}. We consider the unit sphere bundle $SM\to M$ of $(M,\g)$,
\begin{equation}\label{bundle-E}
  \parbox{.67\textwidth}{the vector bundle $E\to SM$ of orthogonal hyperplanes, whose fiber over an element $(x,v)\in SM$ is $E_{(x,v)} = v^\perp$}
\end{equation}
 (i.e., the $\g$-orthogonal complement of the line spanned by $v$ in $T_xM$), and the Stiefel bundle ${\rm St}_2(M,\g) \to M$ of ordered $\g$-orthonormal $2$-frames tangent to $M$, whose fiber over a point $x\in M$ is the set of all pairs $(v,w)$, with $v,w\in T_xM$ both unit and $\g$-orthogonal.

The endomorphisms $A^{\g,\sigma},R^{\g,\sigma}_s\colon E\to E$, where $s>0$, are defined by
\begin{equation}\label{eqn:AOmega}
	\begin{split}
{\rm (i)}~	A^{\g,\sigma}_{(x,v)}(w) &= -\frac{3}{4} \Y_x({\rm P}_v(\Y_x(w))) - \frac{1}{4} {\rm P}_{v^\perp}(\Y_x^2(w)),\quad\mbox{and}\\ {\rm (ii)}~(R^{\g,\sigma}_s)_{(x,v)}(w) &= s^2 R_x(w,v)v - s(\nabla_w\Y)_x(v)  + \frac{s}{2} {\rm P}_{v^\perp}((\nabla_{v}\Y)_x(w)),   
	\end{split}
\end{equation}
and the \emph{$s$-magnetic curvature operator}, $\vec{M}^{\g,\sigma}_s\colon E\to E$, is then given by
\begin{equation}\label{def:MOmega}
  (\vec{M}^{\g,\sigma}_s)_{(x,v)}(w) = (R^{\g,\sigma}_s)_{(x,v)}(w) + A^{\g,\sigma}_{(x,v)}(w).
\end{equation}
In \eqref{eqn:AOmega}, $\nabla$ denotes the Levi-Civita connection of $\g$, while $R$ is its curvature tensor, and the projections ${\rm P}_v\colon T_xM \to \R v$ and ${\rm P}_{v^\perp}\colon T_xM \to v^\perp$ refer to the \mbox{$\g$-orthogonal} direct-sum decomposition $T_xM = (\R v)\oplus v^\perp$.

With this, the \emph{$s$-magnetic sectional curvature} ${\rm sec}^{\g,\sigma}_s\colon {\rm St}_2(M,\g)\to \R$ and the \emph{\mbox{$s$-magnetic} Ricci curvature} ${\rm Ric}^{\g,\sigma}_s\colon SM\to \R$ are defined by, respectively:
\begin{equation}\label{def:sec_and_Ric_mag}
 ({\rm sec}^{\g,\sigma}_s)_x(v,w) = \g_x\left((\vec{M}^{\g,\sigma}_s)_{(x,v)}(w),w\right)  \hspace{.7em}\mbox{and}\hspace{.7em} {\rm Ric}^{\g,\sigma}_s(x,v) = {\rm tr}\,(\vec{M}^{\g,\sigma}_s)_{(x,v)}.
\end{equation}

The dependence of the objects $R^{\g,\sigma}_s,\vec{M}^{\g,\sigma}_s, {\rm sec}^{\g,\sigma}_s, {\rm Ric}_s^{\g,\sigma}$ on the energy parameter \mbox{$s>0$} reflects non-homogeneity of \eqref{eq:landau-hall}, which in turn implies that the dynamics of the magnetic flow it induces on each radius-$s$ sphere bundle depends heavily on the value of $s$. Finally, $A^{\g,\sigma}$ is simply the part of the magnetic curvature operator which is both quadratic in $\Y$ and insensitive to $s$. We also note that the magnetic sectional curvature does not survive as a function on the Grassmannian bundle \mbox{${\rm Gr}_2(TM) \to M$}, as $({\rm sec}^{\g,\sigma}_s)_x(v,w)$ does not equal $({\rm sec}^{\g,\sigma}_s)_x(w,v)$ in general.

\section{The classical compatibility equations from submanifold theory}

The content of this section is standard and it is only included for the reader's convenience---more details can be found in, for instance, \cite[Section 1.3]{dajczer-tojeiro}. Let $(M,\g)$ be a Riemannian manifold, and $N\subseteq M$ be a submanifold of $M$. We denote by $\nabla^M$ and $\nabla^N$ the Levi-Civita connections of $\g$ and of the metric on $N$ induced by $\g$. For each $x\in N$ we have
\begin{equation}\label{tan-nor-decomp-TN}
  \parbox{.82\textwidth}{the $\g$-orthogonal direct-sum decomposition $T_xM = T_xN \oplus [T_xN]^\perp$,}
\end{equation}
 leading to the Gauss and Weingarten formulas
\begin{equation}\label{Gauss-Weingarten}
{\rm (i)}~  \nabla^M_XY = \nabla^N_XY + {\rm II}(X,Y)\quad\mbox{and}\quad {\rm (ii)}~\nabla^M_X\xi = -S_\xi(X) + \nabla^\perp_X\xi,
\end{equation}for all $X,Y\in \mathfrak{X}(N)$ and $\xi \in \mathfrak{X}^\perp(N)$. Here, ${\rm II}$ and $\nabla^\perp$ denote the second fundamental form and normal connection of $N$ relative to $M$, respectively, while $S_\xi\colon TN\to TN$ denotes the shape operator associated with $\xi$. The second fundamental form and shape operators are related via
\begin{equation}\label{II-S}
 \g({\rm II}(X,Y),\xi) = \g(S_\xi(X),Y).
\end{equation}

Decomposing the curvature tensor of $(M,\g)$, we obtain the Gauss equation
\begin{equation}\label{Gauss-eqn}
  [R^M(X,Y)Z]^\top = R^N(X,Y)Z - S_{{\rm II}(Y,Z)}(X) + S_{{\rm II}(X,Z)}(Y),
\end{equation}the Codazzi-Mainardi equation
\begin{equation}\label{CM-eqn}
  [R^M(X,Y)Z]^\perp =  (\nabla^\perp_X{\rm II})(Y,Z) - (\nabla^\perp_Y{\rm II})(X,Z),
\end{equation}and the Ricci equation
\begin{equation}\label{Ricci-eqn}
  [R^M(X,Y)\xi]^\perp = R^\perp(X,Y)\xi + {\rm II}(S_\xi(X),Y) - {\rm II}(X,S_\xi(Y)),
\end{equation}for all $X,Y,Z\in \mathfrak{X}(N)$ and $\xi \in \mathfrak{X}^\perp(N)$. Here, $R^\perp$ is the curvature tensor of the normal connection $\nabla^\perp$, while the covariant derivative of ${\rm II}$ is defined by the expression $(\nabla^\perp_X{\rm II})(Y,Z) = \nabla_X^\perp({\rm II}(Y,Z)) - {\rm II}(\nabla_X^NY,Z) - {\rm II}(Y,\nabla_X^NZ)$ (that is, it is computed with the van der Waerden-Bortolotti connection $\nabla^N\oplus \nabla^\perp$).

\section{The magnetic second fundamental form and shape operator}\label{mag-II}

Let $(\g,\sigma)$ be a magnetic system on a smooth manifold $M$, and $N\subseteq M$ be a submanifold of $M$. We may restrict $(\g,\sigma)$ to a magnetic system on $N$, so that
\begin{equation}\label{YM-YN}
  \parbox{.61\textwidth}{the Lorentz force operators $\Y^M$ and $\Y^N$ are related via $\Y^N_x(v) = [\Y^M_x(v)]^\top$ for all $x\in N$ and $v\in T_xN$;}
\end{equation}
the orthogonal projections $[\,\cdot\,]^\top$ and $[\,\cdot\,]^\perp$ being taken relative to \eqref{tan-nor-decomp-TN}. For economy of notation, we set $\Y^\perp_x(v) = [\Y^M_x(v)]^\perp$ for $v\in T_xN$. Let $\pi\colon SN\to N$ be the bundle projection. With this setup:

\begin{defn}\label{defn:mag-II}
  Let $s>0$ be any positive real number.
  \begin{enumerate}[\normalfont(a)]
  \item The \emph{$s$-magnetic second fundamental form} of $N$ relative to $M$ is the bundle morphism ${\rm II}^\sigma_s\colon \pi^*(TN) \to [TN]^\perp$ given by $({\rm II}^\sigma_s)_{(x,v)}(w) = s^2{\rm II}_x(v,w) - s \Y^\perp_x(w)$.
  \item The \emph{$s$-magnetic shape operator} of $N$ relative to $M$ is the bundle morphism \linebreak[4]$S^\sigma_s\colon \pi^* ([TN]^\perp) \to TN$ given by $(S^\sigma_s)_{(x,v)}(\xi) = s^2 S_\xi(v) + s \Y^M_x(\xi)^\top$.
  \end{enumerate}
\end{defn}

\begin{obs}
  Our Definition \ref{defn:mag-II}(a) does not immediately agree with \cite[Definition 6.9]{ABM_2025-v1}, which is ${\rm II}^{\rm mag}\colon TN\to [TN]^\perp$ with ${\rm II}^{\rm mag}_x(v) = {\rm II}_x(v,v) + \Y^M_x(v) - \Y^N_x(v)$ for any $v\in T_xN$, by an overall sign and the presence of $s$. There are three main reasons for this, which we justify as follows:
  \begin{enumerate}[(i)]
  \item The sign convention adopted in \cite{ABM_2025-v1} for the classical second fundamental form ${\rm II}$ is the opposite of the standard one used in submanifold theory \cite{ONeill, dajczer-tojeiro, manfredo, lee}, which we adopt here, flipping the sign in the right side of (\ref{Gauss-Weingarten}-i). However, as pointed out in \cite{ABM_2025-v1}, the quantity ${\rm II}_x(v,v) + \Y^M_x(v) - \Y^N_x(v)$ vanishes for every $v$ if and only if ${\rm II}_x(v,v)$ and $\Y^M_x(v) - \Y^N_x(v)$ separately vanish for every $v$, as the former is quadratic in $v$ while the latter is linear in $v$. For the same reason, this also holds for our magnetic second fundamental form, meaning that our $({\rm II}^\sigma_s)_{(x,v)}(v)$ vanishes for all $(x,v)\in SN$ if and only if ${\rm II}^{\rm mag}_x(v)$ in \cite{ABM_2025-v1} does whenever $\|v\|=s$.
  \item The definition of magnetic curvature operator \eqref{eqn:AOmega}--\eqref{def:MOmega} makes the dependence on the parameter $s>0$ explicit, by restricting $(x,v)$ to live in the appropriate unit sphere bundle. As our goal is to relate the magnetic second fundamental form to the magnetic curvature operators of $M$ and $N$, it is natural to carry out this same normalization here. 
  \end{enumerate}
  Points (i) and (ii), together with \cite[Remark 6.10]{ABM_2025-v1}, imply that $({\rm II}^\sigma_s)_{(x,v)}(v) = 0$ for every $(x,v)\in SN$ if and only if $N$ is \emph{totally $s$-magnetic}, in the sense that every magnetic geodesic (that is, a solution of \eqref{eq:landau-hall}) with the specific speed $s$ which starts tangent to $N$ remains in $N$ for small time.
  \begin{enumerate}[(i)]\setcounter{enumi}{2}
  \item The classical second fundmental form may be regarded as a collection of linear transformations ${\rm II}_x(v,\cdot) \colon T_xN \to [T_xN]^\perp$, one for each $(x,v)\in SN$, making it a bundle morphism ${\rm II}\colon \pi^*(TN)\to [TN]^\perp$, and our definition of ${\rm II}^\sigma_s$ only adds a magnetic term to that. Similarly, as the quantity $S_\xi(v)$ is linear in both $v\in T_xN$ \emph{and} $\xi \in [T_xN]^\perp$, it may be regarded as a linear transformation $S(v)\colon [T_xN]^\perp \to T_xN$ (the $\g$-adjoint of ${\rm II}_x(v,\cdot)$), and the definition of $S^\sigma_s$ is made so that the relation \[\g_x\big(({\rm II}^\sigma_s)_{(x,v)}(w), \xi\big) = \g_x\big((S_s^\sigma)_{(x,v)}(\xi), w\big)\]always holds, mimicking \eqref{II-S}. This is also in line with the general philosophy that, once a magnetic field is introduced, geometric objects become \emph{anisotropic} (that is, di\-rec\-ti\-on-dependent), already illustrated by the anisotropic Lorentz force and anisotropic twisted connection used in \cite{Assenza_2024} and \cite{ART_2024}.
  \end{enumerate}
\end{obs}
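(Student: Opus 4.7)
The Remark collects three substantive mathematical assertions embedded in what is otherwise an expository justification of Definition \ref{defn:mag-II}: \textbf{(A)} that $({\rm II}^\sigma_s)_{(x,v)}(v)$ vanishes for all $(x,v)\in SN$ if and only if both ${\rm II}_x(v,v)=0$ and $\Y^\perp_x(v)=0$ hold for all $v\in TN$; \textbf{(B)} that this simultaneous vanishing matches the characterization of totally $s$-magnetic submanifolds in \cite[Remark 6.10]{ABM_2025}; and \textbf{(C)} the adjoint identity $\g_x\bigl(({\rm II}^\sigma_s)_{(x,v)}(w),\xi\bigr) = \g_x\bigl((S^\sigma_s)_{(x,v)}(\xi),w\bigr)$. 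The remaining content is commentary on sign and normalization conventions, which requires no argument beyond its own statement.

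For \textbf{(A)}, the plan is the parity argument alluded to in the text: applying the hypothesis to both $(x,v)$ and $(x,-v)$ in $SN$, and using bilinearity of ${\rm II}$ and linearity of $\Y^\perp$, yields the pair of equations $s^2{\rm II}_x(v,v) \mp s\Y^\perp_x(v) = 0$, which forces each term to vanish separately. This extends from unit $v$ to all of $T_xN$ by homogeneity in $v$. To reconcile this with the convention of \cite{ABM_2025}, I would observe that for unit $v$ the vector $sv$ has norm $s$ and ${\rm II}^{\rm mag}_x(sv) = s^2{\rm II}_x(v,v) + s\Y^\perp_x(v)$ differs from $({\rm II}^\sigma_s)_{(x,v)}(v)$ only by the sign of the magnetic term; consequently the zero locus of each on the appropriate sphere imposes exactly the same two conditions, and \textbf{(B)} then follows by directly citing \cite[Remark 6.10]{ABM_2025}.

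For \textbf{(C)}, the plan is a direct algebraic verification: expand the left side using Definition \ref{defn:mag-II}(a) as $s^2\g_x({\rm II}_x(v,w),\xi) - s\g_x(\Y^\perp_x(w),\xi)$, replace the first term by $s^2\g_x(S_\xi(v),w)$ via \eqref{II-S}, and for the second term use $\xi\in[T_xN]^\perp$ to drop the normal projection, writing $\g_x(\Y^\perp_x(w),\xi) = \g_x(\Y^M_x(w),\xi)$; then invoke the $\g$-skew-adjointness of $\Y^M$ (which is equivalent to $\sigma$ being alternating via \eqref{defn:Lorentz-force}) to obtain $-\g_x(w,\Y^M_x(\xi)) = -\g_x(w,\Y^M_x(\xi)^\top)$, using $w\in T_xN$. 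Regrouping and applying Definition \ref{defn:mag-II}(b) produces the right side. The only obstacle here is bookkeeping: tracking which orthogonal projections may be discarded against which vectors, since $\Y^M$ does not respect the tangential/normal splitting \eqref{tan-nor-decomp-TN}. Beyond that, no identity more substantial than the skew-adjointness of $\Y^M$ and the classical relation \eqref{II-S} is invoked.
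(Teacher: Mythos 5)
Your proposal is correct and matches the paper's own (implicit) justification: the parity/homogeneity argument separating the quadratic term ${\rm II}_x(v,v)$ from the linear term $\Y^\perp_x(v)$ is exactly what the remark alludes to, and your direct verification of the adjoint identity via \eqref{II-S} and the $\g$-skew-adjointness of $\Y^M$ is the intended (unwritten) computation behind the phrase ``the definition of $S^\sigma_s$ is made so that the relation always holds.'' The only nitpick is that in reconciling with \cite{ABM_2025} you track only the sign of the magnetic term and not the flipped sign of ${\rm II}$ itself, but since the parity argument makes the vanishing of each homogeneous piece sign-independent, the conclusion is unaffected.
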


\begin{exa}\label{tot-s-mag}
  Let $(\g,\sigma)$ be any magnetic system on a smooth manifold $M$. If \mbox{$N\subseteq M$} is any submanifold for which $\Y^\perp=0$, then $N$ is totally $s$-magnetic for some (and hence every) $s>0$ if and only if $N$ is totally geodesic. This is the case, in particular, when $(\g,\sigma) = (\g,\lambda\omega)$ is a K\"{a}hler magnetic system and $N$ is a complex submanifold of $M$, as then $\Y^M = \lambda J$ and $TN$ being $J$-invariant imply that $\Y^\perp=0$.
\end{exa}

\section{Orthogonal decompositions and the proof of Theorem \ref{mag-comp}}

We continue with the setup of the previous section.

Via \eqref{bundle-E}, we now obtain two vector bundles $E^M\to SM$ and $E^N \to SN$; here $SN$ denotes the unit sphere bundle of $N$. The orthogonal projection $TM\to TN$ clearly maps $E^M$ onto $E^N$, cf. Figure \ref{fig:EM-EN}. As a crucial intermediate step in establishing Theorem \ref{mag-comp}, we will first determine how the operators $(A^{\g,\sigma})^M\colon E^M\to E^M$ and $(A^{\g,\sigma})^N\colon E^N\to E^N$---cf. (\ref{eqn:AOmega}-i)---are related.

\begin{figure}[H]
  \centering
  \tikzset{every picture/.style={line width=0.75pt}} 

\begin{tikzpicture}[x=0.75pt,y=0.75pt,yscale=-1,xscale=1]

\draw    (210.5,162) -- (210,72.5) ;
\draw  [dash pattern={on 4.5pt off 4.5pt}]  (210.5,162) -- (210.5,192.5) ;
\draw    (210.5,192.5) -- (210,270) ;
\draw    (210.5,168.5) -- (160.94,178.41) ;
\draw [shift={(158,179)}, rotate = 348.69] [fill={rgb, 255:red, 0; green, 0; blue, 0 }  ][line width=0.08]  [draw opacity=0] (10.72,-5.15) -- (0,0) -- (10.72,5.15) -- (7.12,0) -- cycle    ;
\draw [color={rgb, 255:red, 38; green, 0; blue, 251 }  ,draw opacity=1 ][line width=1.5]    (180.25,151.25) -- (254.75,192) ;
\draw [color={rgb, 255:red, 255; green, 0; blue, 0 }  ,draw opacity=1 ]   (180.25,71.25) -- (254.75,112) ;
\draw [color={rgb, 255:red, 255; green, 0; blue, 0 }  ,draw opacity=1 ]   (180.25,71.25) -- (180.25,151.25) ;
\draw [color={rgb, 255:red, 255; green, 0; blue, 0 }  ,draw opacity=1 ] [dash pattern={on 4.5pt off 4.5pt}]  (180.25,151.25) -- (180.25,192.75) ;
\draw [color={rgb, 255:red, 255; green, 0; blue, 0 }  ,draw opacity=1 ]   (254.75,112) -- (254.75,192) ;
\draw    (156.3,151) -- (76.5,192.5) ;
\draw    (76.5,192.5) -- (262.7,192.5) ;
\draw    (262.7,192.5) -- (342.5,151) ;
\draw  [dash pattern={on 4.5pt off 4.5pt}]  (180.25,151.25) -- (254.75,152) ;
\draw    (156.3,151) -- (180.25,151.25) ;
\draw    (254.75,152) -- (342.5,151) ;
\draw [color={rgb, 255:red, 255; green, 0; blue, 0 }  ,draw opacity=1 ]   (254.75,192) -- (254.75,233.5) ;
\draw [color={rgb, 255:red, 255; green, 0; blue, 0 }  ,draw opacity=1 ]   (254.75,233.5) -- (254.75,275) ;
\draw [color={rgb, 255:red, 255; green, 0; blue, 0 }  ,draw opacity=1 ]   (180.25,192.75) -- (180.25,234.25) ;
\draw [color={rgb, 255:red, 255; green, 0; blue, 0 }  ,draw opacity=1 ]   (180.25,234.25) -- (254.75,275) ;

\draw (311.5,133.4) node [anchor=north west][inner sep=0.75pt]  [font=\small]  {$T_{x} N$};
\draw (192.5,51.9) node [anchor=north west][inner sep=0.75pt]  [font=\small]  {$[ T_{x} N]^{\perp }$};
\draw (145.5,172.9) node [anchor=north west][inner sep=0.75pt]  [font=\small]  {$v$};
\draw (260,258) node [anchor=north west][inner sep=0.75pt]  [font=\small,color={rgb, 255:red, 255; green, 0; blue, 0 }  ,opacity=1 ]  {$E_{( x,v)}^{M}$};
\draw (145,125) node [anchor=north west][inner sep=0.75pt]  [font=\small,color={rgb, 255:red, 38; green, 0; blue, 251 }  ,opacity=1 ]  {$E_{( x,v)}^{N}$};

\draw (215,158) node [anchor=north west][inner sep=0.75pt]  [font=\small]  {$x$};
\end{tikzpicture}
\caption{The relation $E^N_{(x,v)}= E^M_{(x,v)}\cap T_xN$, for $(x,v)\in SN$.}
  \label{fig:EM-EN} 
\end{figure}

We have the obvious relations
\begin{equation}\label{all-projectors}
  \begin{split}
  {\rm (i)}&~ {\rm P}_{v^\perp}(z^\perp) = {\rm P}_{v^\perp}(z^\perp) = z^\perp,\hspace{.7em} {\rm (ii)}~{\rm P}_{v^\perp}(z^\top) = {\rm P}_{v^\perp}(z)^\top , \\  {\rm (iii)}&~ ({\rm P}_v\circ \Y^M_x)|_{T_xN} = {\rm P}_v\circ \Y^N_x,\quad\mbox{and}\quad {\rm (iv)}~{\rm P}_v\circ \Y^\perp_x=0,
  \end{split}
\end{equation}for any $(x,v)\in SN$ and $z\in T_xM$, where ${\rm P}_v\colon T_xM \to \R v$ and ${\rm P}_{v^\perp}\colon T_xM \to E_{(x,v)}^M$ are as in Section \ref{sec:magnetic_curvature}. Note that (\ref{all-projectors}-iv) follows from (\ref{all-projectors}-iii).

\begin{prop}\label{decomp_Ags}
The formulas
\[ \begin{split}
    {\rm (i)}~ (A^{\g,\sigma})_{(x,v)}^M(w)^\top &= (A^{\g,\sigma})^N_{(x,v)}(w) - \frac{1}{4} {\rm P}_{v^\perp}(\Y^M_x(\Y^\perp_x(w)))^\top \\ {\rm (ii)}~ (A^{\g,\sigma})_{(x,v)}^M(w)^\perp &=-\frac{3}{4} \Y_x^\perp({\rm P}_v(\Y^N_x(w))) - \frac{1}{4} \Y^\perp_x(\Y^N_x(w)) - \frac{1}{4} \Y^M_x(\Y^\perp_x(w))^\perp
  \end{split}\]hold, for any $(x,v) \in SN$ and $w \in E^N_{(x,v)}$.
\end{prop}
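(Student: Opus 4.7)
My plan is a direct unpacking of (\ref{eqn:AOmega}-i), splitting every occurrence of $\Y^M$ applied to a tangent vector via (\ref{YM-YN}) as $\Y^M = \Y^N + \Y^\perp$, and then sorting everything by type. The only extra ingredients are the three identities (\ref{all-projectors}), describing how ${\rm P}_v$ and ${\rm P}_{v^\perp}$ interact with the splitting $T_xM = T_xN \oplus [T_xN]^\perp$. Throughout, I use that $v\in T_xN$ forces both projections to preserve $T_xN$, with ${\rm P}_v$ (resp.\ ${\rm P}_{v^\perp}$) acting as zero (resp.\ the identity) on vectors normal to $N$.

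For the first summand of $A^{\g,\sigma}$, (\ref{all-projectors}-iii) gives ${\rm P}_v(\Y^M_x(w)) = {\rm P}_v(\Y^N_x(w))$, which lies in $\R v \subseteq T_xN$. Splitting $\Y^M$ once more on this vector decomposes
\[
  -\frac{3}{4}\,\Y^M_x({\rm P}_v(\Y^N_x(w))) \;=\; -\frac{3}{4}\,\Y^N_x({\rm P}_v(\Y^N_x(w))) \;-\; \frac{3}{4}\,\Y^\perp_x({\rm P}_v(\Y^N_x(w)))
\]
into a tangential and a normal piece. For the second summand I write $\Y^M_x(\Y^M_x(w)) = \Y^M_x(\Y^N_x(w)) + \Y^M_x(\Y^\perp_x(w))$ and split $\Y^M_x(\Y^N_x(w)) = \Y^N_x(\Y^N_x(w)) + \Y^\perp_x(\Y^N_x(w))$, leaving $\Y^M_x(\Y^\perp_x(w))$ alone---since $\Y^\perp_x(w)$ is already normal, no further simplification of $\Y^M_x$ applied to it is available. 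Applying ${\rm P}_{v^\perp}$ termwise, (\ref{all-projectors}-i) leaves the two pure normal summands unchanged, while (\ref{all-projectors}-ii) rewrites ${\rm P}_{v^\perp}(\Y^M_x(\Y^\perp_x(w))^\top)$ as ${\rm P}_{v^\perp}(\Y^M_x(\Y^\perp_x(w)))^\top$, placing every term in the form appearing in (i) or (ii).

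Finally, collecting the tangential contributions, I identify
\[
  -\frac{3}{4}\,\Y^N_x({\rm P}_v(\Y^N_x(w))) \;-\; \frac{1}{4}\,{\rm P}_{v^\perp}(\Y^N_x(\Y^N_x(w))) \;=\; (A^{\g,\sigma})^N_{(x,v)}(w),
\]
using that the analogous projections on $(N,\g|_N)$ coincide with the ambient ${\rm P}_v$ and ${\rm P}_{v^\perp}$ restricted to $T_xN$ (again because $v\in T_xN$). This yields (i); the normal contributions assemble into (ii) immediately. I do not anticipate any substantial obstacle: the argument is pure bookkeeping. The only thing one must be careful about is to distinguish inputs tangent to $N$, where the split $\Y^M = \Y^N + \Y^\perp$ applies, from inputs already normal, where only the ambient $\top/\perp$ decomposition is available.
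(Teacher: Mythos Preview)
Your proposal is correct and follows essentially the same route as the paper: unpack (\ref{eqn:AOmega}-i) for $M$, replace ${\rm P}_v(\Y^M_x(w))$ by ${\rm P}_v(\Y^N_x(w))$ via (\ref{all-projectors}-iii), split each instance of $\Y^M$ on a tangent input as $\Y^N+\Y^\perp$, and use (\ref{all-projectors}-i)--(ii) to commute ${\rm P}_{v^\perp}$ past the $\top/\perp$ projections before recognizing $(A^{\g,\sigma})^N_{(x,v)}(w)$. The paper organizes the computation by first taking the $\top$ and $\perp$ projections of the whole expression and then expanding $(\Y^M_x)^2(w)$, whereas you expand first and sort afterwards, but this is only a cosmetic difference.
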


\begin{proof}
  As ${\rm P}_v(\Y^M_x(w)) \in T_xN$, we may apply \eqref{YM-YN} and (\ref{all-projectors}-ii) when taking the tangent projection of formula (\ref{eqn:AOmega}-i) for $(A^{\g,\sigma})^M$ to obtain \[  (A^{\g,\sigma})_{(x,v)}^M(w)^\top = -\frac{3}{4} \Y^N_x({\rm P}_v(\Y^N_x(w))) - \frac{1}{4} {\rm P}_{v^\perp}((\Y^M_x)^2(w))^\top.  \]Substituting formula (\ref{eqn:AOmega}-i) for $(A^{\g,\sigma})^N$ into the above and simplifying, we obtain ${\rm (i)}$. As for ${\rm (ii)}$, we proceed similarly: taking the normal projection of formula \mbox{(\ref{eqn:AOmega}-i)} for $(A^{\g,\sigma})^M$, using the definition of $\Y^\perp$ and (\ref{all-projectors}-iv) on the first term in the right side, and (\ref{all-projectors}-i) on the second term, we have that
  \[  (A^{\g,\sigma})^M_{(x,v)}(w)^\perp =  -\frac{3}{4} \Y_x^\perp({\rm P}_v(\Y^N_x(w)) -\frac{1}{4} (\Y^M_x)^2(w)^\perp.  \]Further expanding $(\Y^M_x)^2(w)^\perp = \Y^\perp_x(\Y^N_x(w)) + \Y^M_x(\Y^\perp_x(w))^\perp$, ${\rm (ii)}$ follows.
\end{proof}

Next, we must relate $(R^{\g,\sigma}_s)^M\colon E^M\to E^M$ and $(R^{\g,\sigma}_s)^N\colon E^N\to E^N$. As \mbox{(\ref{eqn:AOmega}-ii)} involves covariant derivatives of the Lorentz force operator, it is convenient to start with an auxilliary result.

\begin{lem}\label{cov-der-YM}
  The formulas
\[  \begin{split}
     {\rm (i)}~ (\nabla^M_v\Y^M)_x(w)^\top  &= (\nabla^N_v\Y^N)_x(w) - S_{\Y^\perp_x(w)}(v) - \Y^M_x({\rm II}_x(v,w))^\top \\ {\rm (ii)}~(\nabla^M_v\Y^M)_x(w)^\perp &= (\nabla^\perp_v\Y^\perp)_x(w) + {\rm II}_x(v,\Y^N_x(w)) - \Y^M_x({\rm II}_x(v,w))^\perp
    \end{split}\]hold, for any $x\in N$ and $v,w\in T_xN$, where the normal covariant derivative $\nabla^\perp\Y^\perp$ is defined as $(\nabla^\perp_X \Y^\perp)(Y) = \nabla_X^\perp(\Y^\perp(Y)) - \Y^\perp(\nabla_X^NY)$, for all $X,Y\in \mathfrak{X}(N)$.
\end{lem}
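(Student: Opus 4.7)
The plan is to reduce both identities to the Gauss and Weingarten formulas after splitting $\Y^M$ into its tangent and normal components along $N$. Extend $v$ and $w$ to local vector fields $V, W \in \mathfrak{X}(N)$, so that at $x$ the left-hand side reads $(\nabla^M_V \Y^M)(W) = \nabla^M_V(\Y^M(W)) - \Y^M(\nabla^M_V W)$. Using \eqref{YM-YN}, write $\Y^M(W) = \Y^N(W) + \Y^\perp(W)$. Applying the Gauss formula (\ref{Gauss-Weingarten}-i) to the tangent summand and the Weingarten formula (\ref{Gauss-Weingarten}-ii) to the normal summand gives
\[
\nabla^M_V(\Y^M(W)) = \nabla^N_V(\Y^N(W)) + {\rm II}(V,\Y^N(W)) - S_{\Y^\perp(W)}(V) + \nabla^\perp_V(\Y^\perp(W)).
\]

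For the second piece, the Gauss formula applied to $\nabla^M_V W$ yields $\Y^M(\nabla^M_V W) = \Y^M(\nabla^N_V W) + \Y^M({\rm II}(V, W))$, and one further decomposes $\Y^M(\nabla^N_V W) = \Y^N(\nabla^N_V W) + \Y^\perp(\nabla^N_V W)$ by \eqref{YM-YN}. Subtracting and separating tangent from normal parts, the combination $\nabla^N_V(\Y^N(W)) - \Y^N(\nabla^N_V W)$ folds into $(\nabla^N_V \Y^N)(W)$, producing the first summand of (i); the combination $\nabla^\perp_V(\Y^\perp(W)) - \Y^\perp(\nabla^N_V W)$ folds into $(\nabla^\perp_V \Y^\perp)(W)$, producing the first summand of (ii) and matching the very definition of the normal covariant derivative declared in the lemma. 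The purely tangent term $-S_{\Y^\perp(W)}(V)$ contributes to (i), while the purely normal term ${\rm II}(V, \Y^N(W))$ contributes to (ii).

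The only term requiring a nontrivial splitting is $-\Y^M({\rm II}(V,W))$: although ${\rm II}(V,W)$ is normal, $\Y^M$ does not a priori respect the decomposition \eqref{tan-nor-decomp-TN}, so its image projects to $-\Y^M({\rm II}_x(v,w))^\top$ in (i) and $-\Y^M({\rm II}_x(v,w))^\perp$ in (ii). I do not anticipate any conceptual obstacle; both sides of (i) and (ii) are tensorial in $v$ and $w$, so the passage from the local vector fields $V,W$ back to the vectors $v,w$ at $x$ is automatic, and the main care required is careful bookkeeping of the tangent/normal splitting throughout.
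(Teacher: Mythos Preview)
Your proposal is correct and follows essentially the same approach as the paper: extend $v,w$ to local tangent fields, expand $(\nabla^M_V\Y^M)(W)$ via the splitting $\Y^M(W)=\Y^N(W)+\Y^\perp(W)$ and the Gauss formula for $\nabla^M_VW$, apply the Gauss and Weingarten formulas, and then read off the tangent and normal components while recognizing $(\nabla^N_V\Y^N)(W)$ and $(\nabla^\perp_V\Y^\perp)(W)$. The bookkeeping you outline matches the paper's computation term for term.
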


\begin{proof}
  Formulas (i) and (ii) are tensorial, but to establish them we first choose vector fields $V$ and $W$ tangent to $N$, defined on some neighborhood of $x$, which extend $v$ and $w$. Then we project both sides of the relation
  \[ (\nabla^M_V\Y^M)(W) = \nabla^M_V(\Y^N(W)) + \nabla^M_V(\Y^\perp(W))  - \Y^M(\nabla^N_VW) - \Y^M({\rm II}(V,W)),\]obtained via the definitions of $\Y^\perp$ and $\nabla^M\Y^M$, onto the tangent and normal bundles of $N$, using \eqref{Gauss-Weingarten}. From
  \[
    \begin{split}
      (\nabla^M_V\Y^M)(W)^\top &= \nabla^N_V(\Y^N(W)) - S_{\Y^\perp(W)}(V) - \Y^N(\nabla^N_VW) - \Y^M({\rm II}(V,W))^\top \\ (\nabla^M_V\Y^M)(W)^\perp &= {\rm II}(V,\Y^N(W)) + \nabla_V^\perp(\Y^\perp(W)) - \Y^\perp(\nabla^N_VW) - \Y^M({\rm II}(V,W))^\perp,
    \end{split}
  \] it suffices to recognize the definitions of $\nabla^N\Y^N$ and $\nabla^\perp\Y^\perp$, and evaluate all of it at the point $x$ to get (i) and (ii).
\end{proof}

With the definition of the normal covariant derivative $\nabla^\perp\Y^\perp$ in place, we also introduce the normal covariant derivative of the $s$-magnetic second fundamental form as
\begin{equation}\label{cov_mag-II}
  (\nabla^\perp_z {\rm II}^\sigma_s)_{(x,v)}(w) = s^2 (\nabla^\perp_z{\rm II})_x(v,w)  -s (\nabla^\perp_z\Y^\perp)_x(w),
\end{equation}
for all $(x,v) \in SN$, $w\in E^N_{(x,v)}$, and $z\in T_xN$. We may now proceed.

\begin{prop}\label{decomp-Rgs}
  The formulas
  \[
    \begin{split}
  {\rm (i)}~    (R^{\g,\sigma}_s)^M_{(x,v)}(w)^\top &= (R^{\g,\sigma}_s)^N_{(x,v)}(w)  -S_{({\rm II}^\sigma_s)_{(x,v)}(v)}(w)+ (S^\sigma_s)_{(x,v)}({\rm II}_x(v,w)) \\  &\quad - \frac{s}{2} {\rm P}_{v^\perp}(S_{\Y^\perp_x(w)}(v) + \Y^M_x({\rm II}_x(v,w))^\top) \\  {\rm (ii)}~ (R^{\g,\sigma}_s)^M_{(x,v)}(w)^\perp &= (\nabla^\perp_w{\rm II}^\sigma_s)_{(x,v)}(v) - (\nabla^\perp_v{\rm II}^\sigma_s)_{(x,v)}(w) - \frac{s}{2} (\nabla^\perp_v\Y^\perp)_x(w) \\ &\quad - s{\rm II}_x(w, \Y^N_x(v)) + \frac{s}{2} \Y^M_x({\rm II}_x(v,w))^\perp + \frac{s}{2} {\rm II}_x(v,\Y^N_x(w))
    \end{split}
\]hold, for any $(x,v) \in SN$, $w\in E^N_{(x,v)}$, and $s>0$.
\end{prop}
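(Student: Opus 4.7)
The plan is to split
\[
(R^{\g,\sigma}_s)^M_{(x,v)}(w) \;=\; s^2 R^M_x(w,v)v \;-\; s(\nabla^M_w\Y^M)_x(v) \;+\; \frac{s}{2}\,{\rm P}_{v^\perp}\bigl((\nabla^M_v\Y^M)_x(w)\bigr)
\]
into its tangential and normal components with respect to \eqref{tan-nor-decomp-TN}, handle each of the three summands separately, and regroup. For the curvature summand I use the Gauss and Codazzi--Mainardi equations \eqref{Gauss-eqn} and \eqref{CM-eqn}. For the two covariant-derivative summands I use Lemma \ref{cov-der-YM} twice (once with the roles of $v$ and $w$ interchanged, to handle $(\nabla^M_w\Y^M)(v)$). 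The projector identities \eqref{all-projectors}(i)--(ii) are what let me commute ${}^\top$ and ${}^\perp$ with ${\rm P}_{v^\perp}$ in the third summand.

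For part (i), Gauss gives $s^2[R^M(w,v)v]^\top = s^2 R^N(w,v)v - s^2 S_{{\rm II}(v,v)}(w) + s^2 S_{{\rm II}(v,w)}(v)$, and Lemma \ref{cov-der-YM}(i) produces the tangential parts of the two $\Y$-derivative terms. Three groupings then yield the statement. First, the three purely $N$-intrinsic pieces assemble into $(R^{\g,\sigma}_s)^N_{(x,v)}(w)$ by (\ref{eqn:AOmega}-ii) applied to $N$. Second, $-s^2 S_{{\rm II}(v,v)}(w) + s S_{\Y^\perp(v)}(w)$ collapses to $-S_{({\rm II}^\sigma_s)_{(x,v)}(v)}(w)$ by the linearity of the shape operator in its normal argument together with Definition \ref{defn:mag-II}(a), while $s^2 S_{{\rm II}(v,w)}(v) + s\Y^M({\rm II}(v,w))^\top$ is exactly $(S^\sigma_s)_{(x,v)}({\rm II}(v,w))$ by Definition \ref{defn:mag-II}(b). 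Third, the remaining cross-terms combine under a common factor $-\frac{s}{2}{\rm P}_{v^\perp}$ to give the residual in the statement.

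For part (ii), Codazzi--Mainardi gives $s^2[R^M(w,v)v]^\perp = s^2\bigl[(\nabla^\perp_w{\rm II})(v,v) - (\nabla^\perp_v{\rm II})(v,w)\bigr]$ after using the symmetry of ${\rm II}$ in its two arguments. Lemma \ref{cov-der-YM}(ii) (used twice) provides the normal parts of the $\Y$-derivative terms, and \eqref{all-projectors}(i) shows that ${\rm P}_{v^\perp}$ acts as the identity on the normal projection of the third summand since $v\in T_xN$. Definition \eqref{cov_mag-II} then lets me recognize $s^2(\nabla^\perp_w{\rm II})(v,v) - s(\nabla^\perp_w\Y^\perp)(v) = (\nabla^\perp_w{\rm II}^\sigma_s)_{(x,v)}(v)$ and, after adding and subtracting $s(\nabla^\perp_v\Y^\perp)(w)$, analogously extract $(\nabla^\perp_v{\rm II}^\sigma_s)_{(x,v)}(w)$. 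The excess $\nabla^\perp_v\Y^\perp$ piece merges with the $+\frac{s}{2}(\nabla^\perp_v\Y^\perp)(w)$ from the third summand, leaving the stated $-\frac{s}{2}(\nabla^\perp_v\Y^\perp)(w)$. What remains are the ${\rm II}(\cdot,\Y^N(\cdot))$ and $\Y^M({\rm II}(v,w))^\perp$ cross-terms with the coefficients claimed.

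The main difficulty is entirely bookkeeping: tracking the half-integer coefficients that arise from the $\frac{s}{2}$ in front of ${\rm P}_{v^\perp}$ in the definition of $R^{\g,\sigma}_s$ interacting with the unit coefficients from $-s(\nabla^M_w\Y^M)(v)$ and the Gauss/Codazzi pieces, and grouping so that the full definitions of ${\rm II}^\sigma_s$, $S^\sigma_s$, and $\nabla^\perp{\rm II}^\sigma_s$ emerge rather than fragmented pieces. No conceptual issue arises beyond that, because, once the three term-by-term decompositions are produced, the identifications are dictated purely by Definitions \ref{defn:mag-II} and \eqref{cov_mag-II}.
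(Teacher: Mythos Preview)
Your proposal is correct and follows essentially the same route as the paper's own proof: project (\ref{eqn:AOmega}-ii) for $(R^{\g,\sigma}_s)^M$ tangentially and normally, invoke the Gauss and Codazzi--Mainardi equations for the curvature piece, apply Lemma~\ref{cov-der-YM} (with the roles of $v$ and $w$ swapped for the $-s(\nabla^M_w\Y^M)(v)$ term), use the projector identities \eqref{all-projectors} to handle the interaction of ${\rm P}_{v^\perp}$ with ${}^\top$ and ${}^\perp$, and then regroup via Definitions~\ref{defn:mag-II} and \eqref{cov_mag-II}. The add-and-subtract of $s(\nabla^\perp_v\Y^\perp)(w)$ you describe in part (ii) is exactly how the paper isolates $(\nabla^\perp_v{\rm II}^\sigma_s)_{(x,v)}(w)$ while leaving the residual $-\tfrac{s}{2}(\nabla^\perp_v\Y^\perp)_x(w)$.
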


\begin{proof}
  Taking the tangent projection of both sides of \mbox{(\ref{eqn:AOmega}-ii)} for $(R^{\g,\sigma}_s)^M$ and applying the Gauss equation \eqref{Gauss-eqn}, formula (\ref{all-projectors}-ii), and Lemma \ref{cov-der-YM}${\rm (i)}$ with the roles of $v$ and $w$ switched, we obtain
  \[
    \begin{split}
      (R^{\g,\sigma}_s)_{(x,v)}^M(w)^\top &= s^2(R^N_x(w,v)v - S_{{\rm II}_x(v,v)}(w) + S_{{\rm II}_x(w,v)}(v)) \\  &\quad -s((\nabla^N_w\Y^N)_x(v) - S_{\Y^\perp_x(v)}(w) - \Y^M_x({\rm II}_x(v,w))^\top) \\ &\quad +\frac{s}{2} {\rm P}_{v^\perp}\left((\nabla^N_v\Y^N)_x(w) - S_{\Y^\perp_x(w)}(v) - \Y^M_x({\rm II}_x(v,w))^\top\right).
    \end{split}
  \]Simply recognizing the defintions of $(R^{\g,\sigma}_s)_{(x,v)}^N$, ${\rm II}^\sigma_s$, and $S^\sigma_s$, we obtain (i). As for (ii), we proceed in a similar fashion: taking the normal projection of both sides of \mbox{(\ref{eqn:AOmega}-ii)} for $(R^{\g,\sigma}_s)^M$, applying the Codazzi-Mainardi equation \eqref{CM-eqn}, and Lemma \ref{cov-der-YM}${\rm (ii)}$ with the roles of $v$ and $w$ switched, it follows that
\[
  \begin{split}
    (R^{\g,\sigma}_s)_{(x,v)}^M(w)^\perp &= s^2 ((\nabla^\perp_w{\rm II})_x(v,v) - (\nabla^\perp_v{\rm II})_x(w,v)) \\ &\quad -s((\nabla^\perp_w\Y^\perp)_x(v) +{\rm II}_x(w,\Y^N_x(v)) - \Y^M_x({\rm II}_x(w,v))^\perp) \\ &\quad+\frac{s}{2} {\rm P}_{v^\perp}\left( (\nabla^M_v\Y^M)_x(w)\right)^\perp.
  \end{split}
\]Expanding $(\nabla^M_v\Y^M)_x(w) = (\nabla^M_v\Y^M)_x(w)^\top + (\nabla^M_v\Y^M)_x(w)^\perp$, we may use formulas (\ref{all-projectors}-i) and (\ref{all-projectors}-ii) to conclude that ${\rm P}_{v^\perp}\left( (\nabla^M_v\Y^M)_x(w)\right)^\perp = (\nabla^M_v\Y^M)_x(w)^\perp$, so that another application of Lemma \ref{cov-der-YM}${\rm (ii)}$ yields
\[
  \begin{split}
    (R^{\g,\sigma}_s)_{(x,v)}^M(w)^\perp &= s^2 ((\nabla^\perp_w{\rm II})_x(v,v) - (\nabla^\perp_v{\rm II})_x(w,v)) \\ &\quad -s((\nabla^\perp_w\Y^\perp)_x(v) +{\rm II}_x(w,\Y^N_x(v)) - \Y^M_x({\rm II}_x(w,v))^\perp) \\ &\quad+\frac{s}{2} \left((\nabla^\perp_v\Y^\perp)_x(w) + {\rm II}_x(v,\Y^N_x(w)) - \Y^M_x({\rm II}_x(v,w))^\perp\right).
  \end{split}
\]Recognizing two instances of \eqref{cov_mag-II} and combining the terms with $\Y^M_x({\rm II}_x(v,w))^\perp$, (ii) is established.
\end{proof}

\begin{proof}[Proof of the main results]
Theorem \ref{mag-comp} follows from adding the expressions obtained in Propositions \ref{decomp_Ags} and \ref{decomp-Rgs}. Corollary \ref{mag-sec-Gauss} follows immediately from Theorem \ref{mag-comp} once one notes that the expression in the second line of item (i) in Proposition \ref{decomp-Rgs} is always orthogonal to $w$. As for item (i) of Corollary \ref{cor-hyp}, we substitute that ${\rm II}_x(v,w) = \g_x(S_\eta(v),w) \eta_x$ and $({\rm II}^\sigma_s)_{(x,v)}(w) = \g_x((S^\sigma_s)_{(x,v)}(\eta_x), w)\eta_x$ into Corollary \ref{mag-sec-Gauss}. Finally, in item (ii), we let $e_2,\ldots, e_{n-1}$ be an orthonormal basis of $E^N_{(x,v)}$, set $w=e_j$ in item (i) and sum over $j$, adding and subtracting the term $({\rm sec}^{\g,\sigma}_s)^M_x(v,\eta_x)$ required to produce $({\rm Ric}^{\g,\sigma}_s)^M(x,v)$.  
\end{proof}

\appendix

\section{The curvature of Killing magnetic systems}\label{app-killing}

We briefly explore some properties of Killing magnetic systems.

\subsection{Geometry in dimension three}

Let $(M,\g)$ be an oriented three-di\-men\-si\-o\-nal Riemannian manifold, and fix its Riemannian volume form $\mu_\g \in \Omega^3(M)$. Given $x\in M$ and $v,w\in T_xM$, we define the cross product $v\times w \in T_xM$ as the unique tangent vector such that
\begin{equation}\label{defn-cross}
  \parbox{.58\textwidth}{$\g_x(v\times w,u) = (\mu_\g)_x(v,w,u)$,\quad for all $u\in T_xM$.}
\end{equation}
This generalizes the standard cross product in $\R^3$, and shares all of its well-known algebraic properties. Namely, the $\times$ operation is bilinear and skew-symmetric, $v\times w$ is always orthogonal to both $v$ and $w$, we have that $v\times \cdot\in \mathfrak{so}(T_xM,\g_x)$ for each $v\in T_xM$, and the relations
\begin{equation}\label{double-cross}
\parbox{.9\textwidth}{\begin{enumerate}[(i)] \item $v\times (w\times z) = \g_x(v,z)w - \g_x(v,w)z$, \item $\g_x(v\times w, z) = \g_x(v,w\times z)$, and \item $\g_x(v_1\times v_2,w_1\times w_2) = \g_x(v_1,w_1)\g_x(v_2,w_2) - \g_x(v_1,w_2)\g_x(v_2,w_1)$\end{enumerate}}
\end{equation}
hold for all possible tangent vectors.

On the level of vector fields, if $X,Y,Z\in \mathfrak{X}(M)$, we have the Leibniz rule $\nabla_X(Y\times Z) = \nabla_XY\times Z + Y\times \nabla_XZ$ (as a consequence of $\g$ and $\mu_{\g}$ being \mbox{$\nabla$-parallel}), while $[X,Y\times Z] = [X,Y]\times Z + Y\times [X,Z]$ holds whenever $X$ is a Killing vector field (for a similar reason, as then $\mathcal{L}_X\g = \mathcal{L}_X\mu_\g = 0$). The curl of $X\in \mathfrak{X}(M)$ is the unique vector field ${\rm curl}_\g X \in \mathfrak{X}(M)$ such that
\begin{equation}\label{def-curl}
  \parbox{.83\textwidth}{$\mu_\g({\rm curl}_{\g}X, Y,Z) = \g(\nabla_YX,Z) -\g(Y,\nabla_ZX)$,\quad for all $Y,Z\in \mathfrak{X}(M)$.}
\end{equation}
Once again, this generalizes the classical notion of curl in $\R^3$. Gradients are always curl-free and curls are always di\-ver\-gen\-ce-free, as a consequence of the identity ${\rm d}^2=0$, and for any $f\in {\rm C}^\infty(M)$ it holds that ${\rm curl}_\g(fX) = f\,{\rm curl}_{\g}X + {\rm grad}_\g f \times X$. Finally, it follows from the Koszul formula together with \eqref{defn-cross} and \eqref{def-curl} that
\begin{equation}\label{Koszul-curl}
  \nabla_YX = \frac{1}{2}\,({\rm curl}_\g X) \times Y,\quad\mbox{for all }Y\in \mathfrak{X}(M),
\end{equation}whenever $X$ is a Killing vector field.

\subsection{Killing systems}

Given a vector field $\vec{B} \in \mathfrak{X}(M)$, we may consider the $2$-form $\sigma = \mu_\g(\vec{B},\cdot,\cdot)$ on $M$. As ${\rm d}\sigma = ({\rm div}_\g\vec{B})\,\mu_\g$, it follows that $\sigma$ is closed if and only if ${\rm div}_\g\vec{B} = 0$ (Gauss's law \emph{redux}), which in particular happens when $\vec{B} = {\rm curl}_{\g}X$ for some $X\in\mathfrak{X}(M)$, or when $\vec{B}$ is a Killing vector field; we assume from here onwards that we are in the latter situation (hence \emph{Killing systems}). It immediately follows from \eqref{defn:Lorentz-force} and \eqref{defn-cross} that the Lorentz force operator of $(\g,\sigma)$ is given by $\Y_x(v) = \vec{B}_x\times v$, for all $(x,v) \in TM$. Using (\ref{double-cross}-i) to deal with the last term in (\ref{eqn:AOmega}-i), we may readily compute that
\begin{equation}
  A^{\g,\sigma}_{(x,v)}(w) = -\frac{3}{4} \g_x(v,\vec{B}_x\times w) \,\vec{B}_x\times v - \frac{1}{4} \g_x(\vec{B}_x,w) {\rm P}_{v^\perp}(\vec{B}_x) + \frac{1}{4} \|\vec{B}_x\|^2w
\end{equation}for all $(x,v) \in SM$ and $w\in v^\perp$, where ${\rm P}_{v^\perp}$ is as in Section \ref{sec:magnetic_curvature}. We may also use \eqref{Koszul-curl} and (\ref{double-cross}-i) to compute that \begin{equation}  (\nabla_w\Y)_x(v) = \nabla_w\vec{B} \times v= \frac{1}{2} (({\rm curl}_{\g}\vec{B})_x\times w)\times v = \frac{1}{2}\g_x(({\rm curl}_\g\vec{B})_x,v)w, \end{equation}directly leading to
\begin{equation}
  (R^{\g,\sigma}_s)_{(x,v)}(w) = s^2 R_x(w,v)v - \frac{s}{2}\g_x(({\rm curl}_\g\vec{B})_x,v)w
\end{equation}from (\ref{eqn:AOmega}-ii), and hence to $(\vec{M}^{\g,\sigma}_s)_{(x,v)}(w)$ via \eqref{def:MOmega}. With this in place, it follows from (\ref{double-cross}-i) and \eqref{def:sec_and_Ric_mag} that the $s$-magnetic sectional curvature is given by
\begin{equation}\label{mag-sec-Killing}
 \begin{split} ({\rm sec}^{\g,\sigma}_s)_x(v,w) &= s^2{\rm sec}_x(v,w) - \frac{s}{2}\g_x(({\rm curl}_\g\vec{B})_x, v) \\ &\qquad+ \g_x(\vec{B}_x,v\times w)^2 + \frac{1}{4} \g_x(\vec{B}_x,v)^2 \end{split}
\end{equation}for all $(x,(v,w)) \in {\rm St}_2(M,\g)$. As for the $s$-magnetic Ricci tensor, note that $w$ and $v\times w$ form an orthonormal basis of $v^\perp$ whenever $(x,v) \in SM$, so that \eqref{def:sec_and_Ric_mag} yields \begin{equation}\label{pre-mag-Ric} \begin{split} ({\rm Ric}^{\g,\sigma}_s)(x,v) &= \g_x((\vec{M}^{\g,\sigma}_s)_{(x,v)}(w),w) + \g_x((\vec{M}^{\g,\sigma}_s)_{(x,v)}(v\times w),v\times w) \\ &= ({\rm sec}^{\g,\sigma}_s)_x(v,w) +({\rm sec}^{\g,\sigma}_s)_x(v,v\times w). \end{split}\end{equation} The second term is obtained by replacing $w$ and $v\times w$ in \eqref{mag-sec-Killing} with $v\times w$ and $v\times (v\times w) = -w$, respectively, cf. (\ref{double-cross}-i). It follows from \eqref{mag-sec-Killing} and \eqref{pre-mag-Ric} that
\begin{equation}\label{mag-Ric-Killing}
  ({\rm Ric}^{\g,\sigma}_s)(x,v) = s^2{\rm Ric}_x(v,v) - s\g_x(({\rm curl}_\g\vec{B})_x, v) + \|\vec{B}_x\|^2 - \frac{1}{2} \g_x(\vec{B}_x,v)^2
\end{equation}for all $(x,v)\in SM$.

\subsection{The static case} Here, we assume that the Killing field $\vec{B}$ is \emph{hy\-per\-sur\-fa\-ce-orthogonal}, in the sense that the distribution $\vec{B}^\perp$ on $M$ is nontrivial and integrable (in general relativity, hypersurface-orthogonal Killing fields are often called \emph{static}). Provided that $\vec{B}$ is nowhere-vanishing, \begin{equation}\label{B-curlB-ort} \parbox{.77\textwidth}{$\vec{B}^\perp$ is integrable if and only if ${\rm curl}_\g\vec{B}$ is always orthogonal to $\vec{B}$.}\end{equation}Indeed, this is an easy consequence of the formula \begin{equation}\label{int-curl} X \g(Y,\vec{B}) - Y\g(X,\vec{B}) = \g([X,Y],\vec{B}) + \g({\rm curl}_{\g}\vec{B}, X\times Y),\end{equation}valid for all $X,Y\in \mathfrak{X}(M)$, and obtained via \eqref{defn-cross}, \eqref{def-curl}, and \eqref{Koszul-curl}.

Let $N$ be an integral surface of $\vec{B}^\perp$, equipped with its induced magnetic system. We first claim that
\begin{equation}\label{int-surface-tot}
  \parbox{.85\textwidth}{$N$ is both totally geodesic and totally $s$-magnetic in $M$, for every $s>0$.}
\end{equation}
Namely, as $\Y^M_x(v) = \vec{B}_x\times v$ is always tangent to $N$ (being orthogonal to $\vec{B}_x$), it follows that $\Y^\perp = 0$; by Example \ref{tot-s-mag}, it now suffices to argue that $N$ is totally geodesic. This, in turn, is immediate from $\vec{B}$ being a Killing vector field and $N$ being an integral surface of $\vec{B}^\perp$: geodesics which start tangent to $N$ remain tangent to $N$ as the function $t\mapsto \g_{\gamma(t)}(\dot{\gamma}(t),\vec{B}_{\gamma(t)})$ is constant whenever $\gamma$ is a geodesic but vanishes at $t=0$.

It now follows from \eqref{int-surface-tot} that both the shape operators and the $s$-magnetic shape operators of $N$ vanish. Hence, with the aid of (\ref{double-cross}-iii), \eqref{Koszul-curl}, and \eqref{B-curlB-ort}, we use Corollary \ref{cor-hyp} to compute the $s$-magnetic Gaussian curvature $K^{\g,\sigma}_s\colon SN \to \R$ of $N$ as
\begin{equation}
  \begin{split}
      K^{\g,\sigma}_s(x,v) &= ({\rm sec}^{\g,\sigma}_s)^N_x(v, \|\vec{B}_x\|^{-1}\vec{B}_x\times v) \\ &\hspace{-1pt}= s^2{\rm sec}^M(T_xN) - \frac{s}{2} \g_x(({\rm curl}_\g\vec{B})_x, v)   + \|\vec{B}_x\|^2.
    \end{split}    
  \end{equation}Particularizing it further: if $\vec{B}$ has constant length $b = \|\vec{B}\|\neq 0$, then $\vec{B}$ is a geodesic vector field, in the sense that $\nabla_{\vec{B}}\vec{B} = 0$, and this implies via \eqref{Koszul-curl} that ${\rm curl}_{\g}\vec{B}=0$ (as ${\rm curl}_\g\vec{B}$ becomes both orthogonal and proportional to $\vec{B}\neq 0$). Hence, $\vec{B}$ is parallel (again by \eqref{Koszul-curl}) and $K^{\g,\sigma}_s(x,v) = s^2 {\rm sec}^M(T_xN) + b^2$ is independent of $v$.

  \subsection{Special Killing systems on $\esf^3$}

  Denoting by $x=(x^0,x^1,x^2,x^3)$ the usual Cartesian coordinates in $\R^4$, consider the unit sphere \[\esf^3 = \{ x\in \R^4 \colon (x^0)^2+(x^1)^2+(x^2)^2+(x^3)^2=1\}\]equipped with its standard round metric $\g^\circ$. Its tangent bundle is globally trivialized by the orthonormal vector fields
  \begin{equation}\label{Ei-Ej-Ek}
    \begin{split}
      \vec{E}_{\rm i}(x) &=  -x^1 \partial_0+x^0 \partial_1 - x^3\partial_2 + x^2\partial_3 \\ \vec{E}_{\rm j}(x) &=  -x^2\partial_0 + x^3 \partial_1 + x^0 \partial_2 - x^1 \partial_3 \\ \vec{E}_{\rm k}(x) &= -x^3\partial_0 - x^2\partial_1 + x^1\partial_2 + x^0\partial_3,
    \end{split}
  \end{equation}corresponding to quaternionic left-multiplication by ${\rm i}$, ${\rm j}$, and ${\rm k}$; their Lie brackets are easily computed to be
  \begin{equation}\label{Lie-B-ijk}
    [\vec{E}_{\rm i},\vec{E}_{\rm j}] = -2\vec{E}_{\rm k},\quad     [\vec{E}_{\rm j},\vec{E}_{\rm k}] = -2\vec{E}_{\rm i},\quad\mbox{and}\quad     [\vec{E}_{\rm k},\vec{E}_{\rm i}] = -2\vec{E}_{\rm j}.
  \end{equation}
  In addition, \eqref{Ei-Ej-Ek} are Killing fields for $\g^\circ$, as their flows consist of rotations in $\esf^3$. Orienting $\esf^3$ so its Riemannian volume form is \mbox{$\mu_{\g^\circ} = \theta^{\rm i}\wedge \theta^{\rm j}\wedge \theta^{\rm k}$}, where $\theta^{\rm i},\theta^{\rm j},\theta^{\rm k}$ constitute the coframe dual to \eqref{Ei-Ej-Ek}, we may use \eqref{Lie-B-ijk} and \eqref{int-curl} to obtain that
  \begin{equation}\label{curl-Ei}
    {\rm curl}_\g\vec{E}_{\rm i} = 2\vec{E}_{\rm i},\quad     {\rm curl}_\g\vec{E}_{\rm j} = 2\vec{E}_{\rm j},\quad\mbox{and}\quad     {\rm curl}_\g\vec{E}_{\rm k} = 2\vec{E}_{\rm k}.
  \end{equation}

  Consider now the Killing magnetic system $(\g^\circ,\sigma^{\rm i})$ corresponding to \mbox{$\vec{B} = \vec{E}_{\rm i}$}. Observe that $\vec{E}_i$ is \emph{not} hypersurface-orthogonal, cf. \eqref{B-curlB-ort} and \eqref{curl-Ei}, and that $\sigma^{\rm i} = {\rm d}(\theta^{\rm i}/2) = \theta^{\rm j} \wedge \theta^{\rm k}$ as a consequence of \eqref{Lie-B-ijk}. It is well-known that the corresponding magnetic geodesics are the helices whose axes are the great circles of $\esf^3$ tangent to $\vec{E}_{\rm i}$ \cite[Theorem 4.1]{Cabrerizo_2013}. Combining \eqref{curl-Ei} with \eqref{Koszul-curl} to write \mbox{$\nabla_w\vec{E}_{\rm i} = \vec{E}_{\rm i}(x)\times w$}, we compute the $s$-magnetic sectional curvature via \eqref{mag-sec-Killing} to be
  \begin{equation}\label{mag-sec-S3}
    \big({\rm sec}^{\g^\circ,\sigma^{\rm i}}_s\big)_x(v,w) = \left(s-\frac{1}{2} \g_x(\vec{E}_{\rm i}(x),v)\right)^2 + \g_x(\vec{E}_{\rm i}(x), v\times w)^2,
  \end{equation}
  for all $(x,(v,w))\in {\rm St}_2(\esf^3,\g^\circ)$. It is clear that \eqref{mag-sec-S3} is non-negative for all values of $s>0$, and it is in fact positive whenever $s> 1/2$. On the other hand, if $s\in (0,1/2]$, we may choose any $(x,v) \in S\esf^3$ such that $\g_x(\vec{E}_{\rm i}(x),v)=2s$, and let $w$ be any unit vector orthogonal to both $v$ and $\vec{E}_{\rm i}(x)\times v$, so that $({\rm sec}^{\g^\circ,\sigma^{\rm i}}_s)_x(v,w) =0$. (For example, $v = 2s\vec{E}_{\rm i}(x) + (1-4s^2)^{1/2}\vec{E}_{\rm j}(x)$ and $w = (1-4s^2)^{1/2}\vec{E}_{\rm i}(x)-2s\vec{E}_{\rm j}(x)$ work.)

  In a similar manner, using \eqref{curl-Ei} and \eqref{mag-Ric-Killing}, it follows that each \mbox{$s$-magnetic} Ricci curvature is given by
  \begin{equation}\label{mag-Ric-S3}
    ({\rm Ric}^{\g^\circ,\sigma^{\rm i}}_s)(x,v) = 2s^2 - 2s\g_x(\vec{E}_{\rm i}(x),v) + 1 - \frac{1}{2}\g_x(\vec{E}_{\rm i}(x),v)^2,
  \end{equation}
  for all $(x,v) \in S\esf^3$. This time, we have that \eqref{mag-Ric-S3} is positive for all values $s \in (0,\infty)\smallsetminus \{1/2\}$, while $({\rm Ric}^{\g^\circ, \sigma}_{1/2})(x,\vec{E}_{\rm i}(x)) = 0$. See Figure \ref{fig:lower-mag-Ric} below. This was predicted by item (ii) of \cite[Proposition 7]{Assenza_2024}, which states that if $(\g,\sigma)$ is any magnetic system on a compact manifold $M$, and $\sigma$ is nowhere-vanishing, there is $s_0>0$ such that ${\rm Ric}^{\g,\sigma}_s>0$ for all $s\in (0,s_0)$.

  \begin{figure}[H]
    \centering
    \includegraphics[scale=.8]{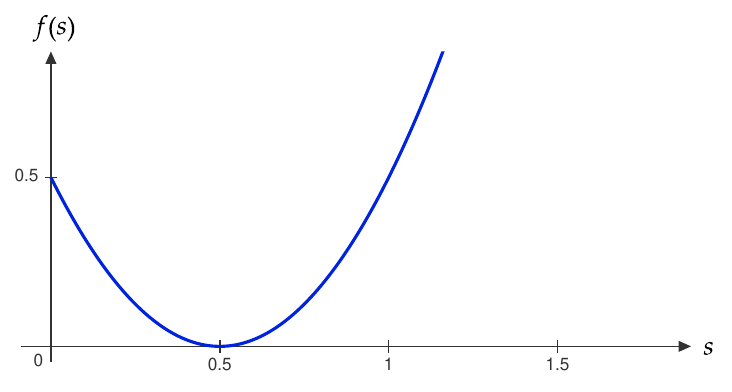}
    \caption{The graph of the function $f\colon (0,\infty)\to \R$ given by $f(s) = ({\rm Ric}^{\g^\circ,\sigma^{\rm i}}_s)(x,\vec{E}_{\rm i}(x)) = 2s^2-2s+1/2 = 2(s-1/2)^2$, satisfying the optimal bound ${\rm Ric}^{\g^\circ,\sigma^{\rm i}}_s \geq f(s)$ for every $s>0$.}
    \label{fig:lower-mag-Ric}
  \end{figure}

  The value $s_0=1/2$ marks a distinct change in the geometric behavior of the magnetic system $(\g^\circ, \sigma^{\rm i})$. This makes our last result not surprising:

  \begin{prop}\label{Mane-S3}
    The Ma\~{n}\'{e} critical value is $\mathbf{c}(\g^\circ,\sigma^{\rm i})=1/2$.
  \end{prop}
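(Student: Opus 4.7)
The plan is to compute $\mathbf{c}(\g^\circ,\sigma^{\rm i})$ directly from the variational characterization of the Mañé critical value. Since $\esf^3$ is simply connected and $\sigma^{\rm i}$ is exact, with canonical primitive $\theta^{\rm i}/2$ identified in the preceding discussion, I would work with
\[
\mathbf{c}(\g^\circ,\sigma^{\rm i}) \;=\; \inf_{u \in C^\infty(\esf^3)}\,\sup_{x\in \esf^3}\, \|\theta^{\rm i}_x/2 + {\rm d}_xu\|_{\g^\circ},
\]
using the speed normalization consistent with the $s$-parametrization adopted throughout the paper. Equivalently, by integrating along closed loops, $\mathbf{c}$ is the supremum of $|\!\int_\gamma \theta^{\rm i}/2|/\ell(\gamma)$ taken over piecewise-smooth closed curves $\gamma \subset \esf^3$ of length $\ell(\gamma)$.

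The upper bound $\mathbf{c} \leq 1/2$ follows at once by taking $u \equiv 0$: since $\vec{E}_{\rm i}$ is a unit Killing field, its $\g^\circ$-dual satisfies $\|\theta^{\rm i}\|_{\g^\circ} \equiv 1$, whence $\sup_x \|\theta^{\rm i}_x/2\| = 1/2$. For the matching lower bound, I would test the functional against the closed integral curves of $\vec{E}_{\rm i}$: because $\vec{E}_{\rm i}$ generates left quaternionic multiplication by $\exp(t{\rm i})$, its flow is $2\pi$-periodic, so each orbit $\gamma$ is a great circle of length $2\pi$ traversed at unit speed. Since ${\rm d}u$ integrates to zero around a closed loop,
\[
\int_\gamma \big(\theta^{\rm i}/2 + {\rm d}u\big) \;=\; \tfrac{1}{2}\!\int_0^{2\pi}\!\g^\circ\!\big(\vec{E}_{\rm i},\vec{E}_{\rm i}\big)\,{\rm d}t \;=\; \pi
\]
for every $u\in C^\infty(\esf^3)$; combining this with the pointwise inequality $(\theta^{\rm i}/2 + {\rm d}u)(\dot\gamma) \leq \|\theta^{\rm i}/2 + {\rm d}u\|_{\gamma(t)}$ and integrating yields $\pi \leq 2\pi \sup_x \|\theta^{\rm i}_x/2 + {\rm d}_xu\|$, and taking the infimum over $u$ completes the bound.

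The principal difficulty here is conceptual rather than computational: one needs to pin down the precise convention for $\mathbf{c}$ so that the stated answer is indeed $1/2$ rather than the corresponding kinetic-energy value $\tfrac{1}{2}(1/2)^2 = 1/8$. Once the formulation is fixed, what makes the computation snap together is that $\theta^{\rm i}/2$ has \emph{constant} norm $1/2$, saturating the upper bound trivially, while the closed orbits of the dual Killing field $\vec{E}_{\rm i}$ saturate the Cauchy--Schwarz step in the lower bound, locking both inequalities at $1/2$. Pleasingly, the resulting value coincides with the critical speed $s_0=1/2$ at which the magnetic Ricci \eqref{mag-Ric-S3} first attains zero, precisely as anticipated by the remark preceding the proposition.
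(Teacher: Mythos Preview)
Your proof is correct, and for the upper bound it coincides exactly with the paper's: take the primitive $\theta^{\rm i}/2$, which has constant norm $1/2$, and apply \eqref{Mane-1}. For the lower bound, however, you take a different route. The paper invokes the second characterization \eqref{Mane-2} via the action functional: for $s\in(0,1/2)$ it evaluates $A_s$ on the reparametrized orbit $\gamma(t)=(\cos(st),\sin(st),0,0)$ and obtains $A_s(\gamma)=(2s-1)\pi<0$. You instead stay entirely within \eqref{Mane-1}: since $\esf^3$ is simply connected, every primitive is $\theta^{\rm i}/2+{\rm d}u$; integrating over a unit-speed orbit of $\vec{E}_{\rm i}$ (a great circle of length $2\pi$) gives $\pi$ regardless of $u$, and Cauchy--Schwarz forces $\sup_x\|\theta^{\rm i}_x/2+{\rm d}_xu\|\geq 1/2$. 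Both arguments use the same test curve; yours is marginally more self-contained in that it does not appeal to the action-functional description, while the paper's makes the connection to the $s$-dependent dynamics more explicit. One small caveat: your aside that $\mathbf{c}$ \emph{equals} $\sup_\gamma |\!\int_\gamma\theta^{\rm i}/2|/\ell(\gamma)$ is stated without justification and is not needed---only the inequality $\mathbf{c}\geq|\!\int_\gamma\theta^{\rm i}/2|/\ell(\gamma)$ is used, and that follows directly from the argument you give.
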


  Recall from Section \ref{sec:magnetic_curvature} that the dynamics of the magnetic flow on the $s$-sphere bundle depends on the value of $s>0$. The Ma\~{n}\'{e} critical value $\mathbf{c}(\g,\sigma)$---defined for arbitrary magnetic systems $(\g,\sigma)$ on any manifold $M$---marks where the dynamical behavior of the magnetic flow changes. To justify Proposition \ref{Mane-S3}, we use two distinct characterizations of $\mathbf{c}(\g,\sigma)$. The first one is that
  \begin{equation}\label{Mane-1}
    \mathbf{c}(\g,\sigma) =
    \begin{cases}
      \inf\limits_{p^*\sigma = {\rm d}\theta}\sup\limits_{x\in \widetilde{M}} \|\theta_x\|,&\mbox{ if $\sigma$ is weakly exact}, \\ +\infty,&\mbox{otherwise},
    \end{cases}
  \end{equation}where $p\colon \widetilde{M}\to M$ is the universal covering of $M$, cf. \cite[Section 2]{Merry_2010}. The second one, in the weakly exact case, is that
  \begin{equation}\label{Mane-2}
    \mathbf{c}(\g,\sigma) = \inf\{s>0 : A_s(\gamma) \geq 0\mbox{ for all }\gamma \in \Lambda\widetilde{M} \},
  \end{equation}where $\Lambda\widetilde{M}$ is the Hilbert manifold of absolutely continuous loops in $\widetilde{M}$, and the action functional $A_s\colon \Lambda \widetilde{M} \to \R$ is defined as \begin{equation}\label{action-As}  A_s(\gamma) = \int_0^T \frac{1}{2}(\|\dot{\gamma}(t)\|^2+s^2) - \theta_{\gamma(t)}(\dot{\gamma}(t))\,{\rm d}t,  \end{equation}with $T>0$ being the least period of $\gamma$, the norm computed via $p^*\g$, and $\theta$ being any primitive of $p^*\sigma$; see \cite[Section 5.1]{CieliebakFrauenfelderPaternain_2010}.

  \begin{proof}[Proof of Proposition \ref{Mane-S3}]
 As $\sigma^{\rm i} = {\rm d}(\theta^{\rm i}/2)$ and $\|\theta^{\rm i}/2\|=1/2$, it follows from \eqref{Mane-1} that $\mathbf{c}(\g^\circ ,\sigma^{\rm i}) \leq 1/2$. As for the reverse inequality, let $s\in (0,1/2)$ and consider the curve $\gamma\colon [0, 2\pi/s]\to \esf^3$ given by $\gamma(t) = (\cos(st), \sin(st),0,0)$. Then $\theta^{\rm i}_{\gamma(t)}(\dot{\gamma}(t)) = s$ implies---via \eqref{action-As}---that $A_s(\gamma) = (2s-1)\pi < 0$, due to our choice of $s$; by \eqref{Mane-2}, this shows that $\mathbf{c}(\g^\circ,\sigma^{\rm i}) \geq 1/2$.
  \end{proof}

\bibliography{magnetic_refs}{}
\bibliographystyle{plain}

\end{document}